\newtheorem{thm}{Theorem}[section]
\newtheorem{proposition}[thm]{Proposition}
\newtheorem{corollary}[thm]{Corollary}
\newtheorem{clm}[thm]{Claim}
\newcommand\ex{\ensuremath{\mathrm{ex}}}
\newcommand\cN{{\mathcal N}}
\newcommand{\ignore}[1]{}
\title{Generalized Turán problems for $K_{2,t}$}
\author{Dániel Gerbner \\ Alfr\'ed R\'enyi Institute of Mathematics}
\date{}
\begin{document}

\maketitle

\begin{abstract}
    We study the generalized Tur\'an function $\ex(n,H,F)$, when $H$ or $F$ is $K_{2,t}$. We determine the order of magnitude of $\ex(n,H,K_{2,t})$ when $H$ is a tree, and determine its asymptotics for a large class of trees. We also determine the asymptotics of $\ex(n,K_{2,t},H)$ in most cases.
\end{abstract}

\section{Introduction}

One of the most fundamental problems of extremal graph theory deals with determining the largest number of edges in $n$-vertex graphs that do not have a given subgraph $F$. This quantity is called the Tur\'an number of $F$ and is denoted by $\ex(n,F)$. Here we mention one particular result due to  F\"uredi \cite{fur}: for $t\ge 2$ we have $\ex(n,K_{2,t})=(1+o(1))\sqrt{t-1}n^{3/2}/2$.

In this paper we deal with a generalization of the Tur\'an number. Given two graphs $H$ and $G$, we denote by $\cN(H,G)$ the number of (unlabeled) copies of $H$ in $G$. We let $\ex(n,H,F):=\max \{\cN(H,G): \,G \text{ is an $n$-vertex $F$-free graph}\}$, i.e. the largest number of copies of $H$ in $n$-vertex $F$-free graphs. In particular, $\ex(n,K_2,F)=\ex(n,F)$.
We will consider the cases when either $H$ or $F$ is $K_{2,t}$ for some $t\ge 2$. 

Let us describe the above mentioned $K_{2,t}$-free construction of F\"uredi \cite{fur} in more detail, as we will use it later. Let $q_t(n)$ be the largest prime power such that $t-1$ divides $q_t(n)-1$ and $(q_t(n)^2-1)/(t-1)\le n$. When $t$ and $n$ are clear from the context, we will sometimes omit them and denote $q_t(n)$ by $q$. It is well-known that for sufficiently large $n$ there exists such a $q_t(n)$ with $\sqrt{nt}-n^{1/3}<q_t(n)$ \cite{HI}. Therefore, we can use graphs on $(q_t(n)^2-1)/(t-1)$ vertices instead of $n$-vertex graphs, and this does not change the asymptotics of the lower bounds obtained.

Let us consider a $q_t(n)$-element field and let $h$ be an element of order $t-1$, thus the elements $\{h,h^2,h^3,\dots,h^{t-1}\}$ form a multiplicative subgroup. Two pairs $(a,b)$ and $(a',b')$ of elements are considered equivalent if $a=h^pa'$ and $b=h^pb'$ for some $p$. Then we have $(q_t(n)^2-1)/(t-1)$ equivalence classes, they form the vertex set of our graph. The equivalence class of $(a,b)$ and the equivalence class of $(c,d)$ are joined by an edge if $ac+bd=h^p$ for some $p$. Observe that by this definition some vertices could be connected to themselves, creating a loop; we will call such vertices \textit{special} and ignore the loops to obtain a simple graph that we call the F\"uredi graph and denote by $F(n,t)$.

F\"uredi \cite{fur} showed that any special vertex of $F(n,t)$ is connected to exactly $q-1$ vertices and other vertices are connected to exactly $q$ vertices. Two vertices $u$ and $v$ of $F(n,t)$ have exactly $t-1$ common neighbors unless $u$ and $v$ are adjacent and at least one of them is a special vertex, in which case they have at most $t-2$ common neighbors. 

\smallskip

Now we are ready to introduce the main definition of this paper.
A graph $H$ is called \textit{$t$-F\"uredi-good} if \[\ex(n,H,K_{2,t})=(1+o(1))\cN(H,F(n,t)).\] 

This definition is motivated by the similar notion of $k$-Tur\'an-good graphs \cite{gp2}. Those are the graphs $H$ with $\ex(n,H,K_k)=\cN(H,T_{k-1}(n)$, where $T_{k-1}(n)$ is the $(k-1)$-partite Tur\'an graph, which is the unique $n$-vertex $K_k$-free with $\ex(n,K_k)$ edges. Similarly, here we examine when we have the same extremal graph as in the ordinary Tur\'an problem, but only in the asymptotical sense.

Now, we can quickly summarize most of the results concerning $\ex(n,H,K_{2,t})$ for $t\ge 2$: Alon and Shikhelman \cite{as} showed that $K_3$ is $t$-F\"uredi-good, Gerbner and Palmer \cite{GP2019} showed that $P_k$ and $C_k$ are $t$-F\"uredi-good, Gerbner and Patk\'os \cite{gepat} showed that $K_{2,s}$ is $t$-F\"uredi-good.

There are some other results. Zhang and Ge \cite{zg} showed that for $t\ge 2m-3\ge 3$ we have $\ex(n,K_m,K_{2,t})=\Theta(n^{3/2})$.
Gerbner and Patk\'os \cite{gepat} determined $\ex(n,K_{1,p},K_{2,t})$ exactly if $p>t+1$ and $n$ is large enough.
The case of forbidden $K_{2,2}$ was studied in \cite{gerbner2,ggymv}. In particular, Gerbner \cite{gerbner2} showed that $P_3$ is 2-F\"uredi-good, but also presented an exact result $\ex(n,P_3,K_{2,2})=\binom{n}{2}$ for $n$ even and $\ex(n,P_3,K_{2,2})=\binom{n}{2}-1$ (with the construction being a matching added to the star $S_n$). Moreover, for other stars we have $\ex(n,S_r,K_{2,2})=\binom{n-1}{r-1}$.


\medskip

In our main result, we characterize the $t$-F\"uredi-good trees. Furthermore, we determine the order of magnitude of $\ex(n,T,K_{2,t})$ for other trees as well. Before stating our results, we need to introduce further definitions. 

Let $T$ be a tree. We will partition its vertex set into two parts $A$ and $B$ applying a greedy process. Originally $A$ consists of vertices of degree 2, and $B$ contains the rest of the vertices. Then, if there is a non-leaf vertex $v$ in $B$ that is connected to at most two vertices in $B$, then we add $v$ to $A$. We repeat this as long as we can find a vertex satisfying the above property. This gives an ordering $v_1,\dots,v_m$ of vertices in $A$ with degree more than 2 (the order of adding them to $A$). This ordering is not unique, but each ordering gives the same $A$ and $B$. When we refer to this ordering later, we mean that we fix one of the possible orderings.

We say that $T$ is \textit{nice} if every vertex of $B$ is a leaf, and $T$ has at least 3 vertices.

\begin{thm}\label{nicetrees}
Nice trees are $t$-F\"uredi-good for every $t\ge 2$.
\end{thm}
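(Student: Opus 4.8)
The plan is to prove the upper bound $\ex(n,T,K_{2,t}) \le (1+o(1))\cN(T,F(n,t))$ for a nice tree $T$; the lower bound is automatic since $F(n,t)$ is $K_{2,t}$-free. The starting point is the structural description of a nice tree: its vertex set splits as $A \cup B$ where $B$ consists entirely of leaves, and every vertex of $A$ has degree $2$ in $T$ except for a distinguished set $\{v_1,\dots,v_m\}$ of high-degree vertices, each attached to at most two other vertices of $A$ (after the greedy peeling). Consequently $T$ is obtained from a small ``core'' tree on $A$ (in which the $v_i$ have degree $\le 2$, hence the core is essentially a path or a caterpillar-like structure of paths) by hanging pendant leaves (the vertices of $B$) onto the $v_i$. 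I would first make this structure fully explicit, writing $T$ as: a base path/tree on $A$, plus $b_i$ pendant leaves attached at each $v_i$, with $\sum b_i = |B|$.

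Next I would count copies of $T$ in an arbitrary $K_{2,t}$-free graph $G$ on $n$ vertices. The idea is to build a copy of $T$ by choosing the images of the core vertices first and then the pendant leaves. Embedding the path-part of the core uses only that $G$ has $O(n^{3/2})$ edges (by Füredi's theorem $\ex(n,K_{2,t}) = (1+o(1))\sqrt{t-1}\,n^{3/2}/2$), so the number of ways to embed a path of a given length is $O(n \cdot n^{3/2 \cdot (\text{length}-1)/\dots})$ — more precisely one uses the standard fact that the number of copies of $P_k$ is maximized, up to constants, by the Füredi graph (this is the Gerbner--Palmer result that $P_k$ is $t$-Füredi-good, which I may invoke). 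The pendant leaves at $v_i$ contribute a factor $\binom{d(x_i)}{b_i}$ where $x_i$ is the image of $v_i$; since $d(x_i) \le \Delta(G)$ and in a $K_{2,t}$-free graph most vertices have degree $O(\sqrt n)$ while the number of vertices of degree $\ge D$ is $O(n^2/D^2)$, the dominant contribution comes from vertices of degree $\Theta(\sqrt n)$ — which is exactly the degree of (non-special) vertices in $F(n,t)$. I would use a convexity / weight-shifting argument to show that spreading the pendant leaves across vertices of degree $\approx \sqrt{(t-1)n}$ is asymptotically optimal, matching $F(n,t)$.

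The heart of the argument, and the step I expect to be the main obstacle, is handling the \emph{interaction} between the high-degree vertices $v_i$ and the $t$-free condition: when two $v_i$'s are adjacent or close in the core, the constraint that $G$ is $K_{2,t}$-free limits how many common neighbors their images can have, hence limits the number of leaf-embeddings. This is precisely why the greedy definition of $A$ insists each $v_i$ has at most two $B$-neighbors in... no — each $v_i$ is connected to at most two vertices \emph{in $A$}: this bounded ``core-degree'' ensures that after fixing the core embedding, the leaf-choices at distinct $v_i$ are essentially independent, up to lower-order error terms coming from overlaps, and that these overlaps are controlled by the $K_{2,t}$-freeness (at most $t-1$ common neighbors of any two images). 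I would quantify the error by a supersaturation-type estimate: the number of copies of $T$ using a ``bad'' configuration (two high-degree images with too many shared leaf-targets, or a high-degree image of degree much larger than $\sqrt n$) is $o(\cN(T,F(n,t)))$, because $\cN(T,F(n,t)) = \Theta(n^{|A|/2 + |B|/2})$ roughly (each high-degree vertex of $F(n,t)$ has degree $\Theta(\sqrt n)$ and there are $\Theta(n)$ of them), so any sub-$\sqrt n$ loss in even one factor is negligible. Finally I would assemble the bound: summing the main term over all core embeddings reproduces $(1+o(1))\cN(T,F(n,t))$, using that the Füredi graph is simultaneously near-optimal for path-counts and has the near-maximal degree sequence for a $K_{2,t}$-free graph. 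The induction on the greedy ordering $v_1,\dots,v_m$ (peeling off one high-degree vertex and its pendant leaves at a time, reducing to a smaller nice tree or to a path) gives a clean way to organize the estimate and to propagate the $(1+o(1))$ factors.
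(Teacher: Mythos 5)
There is a genuine gap, and it sits exactly where you predicted the main obstacle would be. Your plan is to embed the ``core'' on $A$ first and then multiply by $\binom{d(x_i)}{b_i}$ for the pendant leaves at $v_i$, arguing by convexity that the dominant contribution comes from images of degree $\Theta(\sqrt{n})$. But convexity pushes the other way: subject to $\sum_x d(x)=O(n^{3/2})$, a sum of the form $\sum_x\binom{d(x)}{b}$ is maximized by concentrating degree, and a single vertex of degree $n-1$ contributes $\binom{n-1}{b}=\Theta(n^{b})$, which beats $n\cdot\binom{\sqrt{(t-1)n}}{b}=\Theta(n^{1+b/2})$ as soon as $b\ge 3$. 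Indeed your argument, as written, applies verbatim to the star $S_r$ (core $=$ one vertex, $r-1$ pendant leaves), where the conclusion is false: $\ex(n,S_r,K_{2,t})=\Theta(n^{r-1})\gg\cN(S_r,F(n,t))$ --- and stars are precisely not nice. So some use of niceness is indispensable and is missing. The relevant structural fact, which you state in garbled form (``at most two other vertices of $A$'' versus the actual greedy condition, at most two neighbours in the current $B$), is that every vertex of $A$ has at most two leaf neighbours, so every $b_i\le 2$; and even granting that, you would still have to control $\sum_{\phi}\prod_i\binom{d(\phi(v_i))}{b_i}$ over core-embeddings $\phi$, where the degree of $\phi(v_i)$ is correlated with the number of core-embeddings passing through it. A pointwise convexity or degree-distribution bound does not decouple this, and your proposal gives no mechanism for doing so. (A smaller point: the core induced on $A$ need not be a path or caterpillar; e.g.\ for a spider with legs of length two it is a star.)

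The paper's proof avoids degree sums altogether by locally reversing the order of embedding: it finds a vertex $v\in A$ with a leaf neighbour $v'$ already embedded (and possibly a second leaf neighbour $v''$, placed arbitrarily at a cost of $n$), embeds the largest subtree hanging off $v$ first, and only then chooses the image of $v$ as a common neighbour of two already-embedded vertices --- which costs a factor of at most $t-1$ by $K_{2,t}$-freeness. Every vertex of $A$ is ``paid for'' in this way, contributing $O(\sqrt{n})$ or $O(1)$ exactly as in $F(n,t)$, and the induction is run on the stronger statement of Theorem \ref{main} (embeddings with several far-apart leaves fixed) to make the bookkeeping close. If you want to salvage the core-first approach you would need to prove the correlated degree-sum inequality above, and I do not see how to do that without effectively rediscovering the common-neighbour trick.
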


Let us assume now that $T$ is not nice and introduce some notation. Let $L$ be the set of leaves of $T$.
Let $Q$ be the subgraph induced on $B$ (thus $Q$ contains $L$). Let $Q'$ be the subgraph we obtain by deleting $L$ from $Q$. 
$Q'$ is a forest, let $Q_i$ denote its connected components. 
Removing $Q'$ cuts $T$ into several subtrees $T_1,\dots,T_s$. Each $T_j$ is connected to one or more $Q_i$ with an edge.
Observe that if $Q'$ has no vertices, then $T$ is nice.

\begin{thm}\label{nemnice}
Let $t\ge 2$ and $T$ be a tree on at least three vertices that is not nice. Then $\ex(n,T,K_{2,t})=\Theta\left(n^{|L|+\frac{|A|+s}{2}}\right)$.
\end{thm}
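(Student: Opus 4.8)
The plan is to establish the two matching bounds $\ex(n,T,K_{2,t})=\Omega\!\left(n^{|L|+(|A|+s)/2}\right)$ and $\ex(n,T,K_{2,t})=O\!\left(n^{|L|+(|A|+s)/2}\right)$ separately, with the upper bound being the substantial half.

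For the lower bound I would build a $K_{2,t}$-free graph as a hierarchical blow-up. The non-leaf "skeleton'' of $T$ — the forest $Q'$ together with the high-degree vertices $v_1,\dots,v_m$ of $A$ — is realized by a bounded family of \emph{hubs} of degree $\Theta(n)$; since there are only constantly many of them one can take their neighbourhoods essentially disjoint and introduce only the finitely many forced overlaps (each of size $\le t-1$) needed to realize the edges of $T$ among these vertices and the bounded number of common neighbours they must share, so the hub part is $K_{2,t}$-free. Inside each hub's neighbourhood I place either a large private independent set — used to embed the leaves of $T$ attached directly to that hub, each then gaining $\Theta(n)$ images — or a Füredi graph $F(\cdot,t)$ (or, recursively, a further sub-hub with its own private fan), into which a subtree $T_j$ together with the degree-two vertices of $A$ it contains is embedded "downward'', the vertex of $T_j$ meeting $Q'$ landing in the Füredi part and the rest descending. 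Choosing how to distribute the $T_j$'s over the hubs so as to maximize the count, and using the remark after Füredi's construction to replace $n$ by $(q_t(n)^2-1)/(t-1)$ without affecting asymptotics, one counts copies of $T$: each of the $|L|$ leaves contributes $\Theta(n)$, each of the relevant vertices of $A$ and each of the $s$ subtrees contributes a further $\Theta(\sqrt n)$ (from a Füredi embedding), and hubs and forced common neighbours contribute $\Theta(1)$; the product is $\Theta\!\left(n^{|L|+(|A|+s)/2}\right)$.

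For the upper bound I would bound $\cN(T,G)$ for an arbitrary $n$-vertex $K_{2,t}$-free $G$ by peeling $T$ from the leaves inward: a pendant vertex (or layer of a subtree $T_j$) rooted at $v$ contributes at most a factor of the form $\binom{d(v)}{\cdot}$, and one works through the $T_j$'s layer by layer and then through $Q'$. The three tools for $K_{2,t}$-free $G$ are: (a) $\sum_v\binom{d(v)}{k}=\cN(K_{1,k},G)\le(t-1)\binom nk$, hence $\sum_v d(v)^k=O(n^k)$ for all $k\ge 2$; (b) every two vertices have at most $t-1$ common neighbours, so for each vertex $u$ one has $\sum_{v\sim u}d(v)\le d(u)+(t-1)(n-1)=O(n)$ and, more generally, weighted walk-counts from $u$ are controlled; (c) for every edge $uv$, $d(u)+d(v)\le n+t-1$, so a vertex adjacent to a near-linear-degree vertex is itself sparse "outward''. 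Running a dichotomy on whether a vertex has degree above or below roughly $\sqrt n$, and combining (a)–(c), should show that in any embedding each leaf of $T$ contributes $O(n)$, each vertex of $A$ (and each subtree-internal vertex) contributes $O(\sqrt n)$, and each subtree $T_j$ contributes one further $O(\sqrt n)$, which matches the lower bound.

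The hard part is exactly this upper bound. Naive applications of (a)–(c) lose polynomial factors — for example $\sum_{v\sim u}d(v)^a\le n^{a-1}\sum_{v\sim u}d(v)$ is hopelessly wasteful when $u$ itself is a hub — so the real work is a careful simultaneous induction that plays fact (c) (neighbours of a large vertex are constrained) against fact (a) (large vertices are few), peeling $Q'$ and the layers of the $T_j$'s in the right order so that the contribution of every vertex of $T$ is pinned to exactly $\Theta(1)$, $\Theta(\sqrt n)$, or $\Theta(n)$ according to its role in the partition. On the construction side the delicate point is the recursion that forces certain leaves of the subtrees $T_j$ to have degree $\Theta(n)$ in $G$ while keeping $G$ globally $K_{2,t}$-free — and this is precisely where the exact form of the greedy partition $(A,B)$ and of the decompositions $Q'=\bigsqcup Q_i$ and $T\setminus Q'=\bigsqcup T_j$ enters.
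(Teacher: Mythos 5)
Your write-up is a plan rather than a proof, and the plan omits precisely the ideas that make the argument work. For the upper bound you list three standard $K_{2,t}$-free estimates and then assert that a ``careful simultaneous induction'' combining them ``should show'' the bound, while conceding that the naive applications lose polynomial factors; nothing in the proposal closes that gap. The paper's upper bound does not fight with degree power sums at all. It first embeds the leaves adjacent to $Q'$ ($O(n)$ choices each), and then exploits the defining property of the greedy partition: when the process terminates, every non-leaf vertex of $B$ has at least three neighbours in $B$, so in any not-yet-embedded sub-forest of $Q'$ a leaf of that sub-forest already has two embedded neighbours and hence at most $t-1$ possible images. Iterating this pins down all of $Q'$ with $O(1)$ total choices. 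Each subtree $T_j$ is then handled by the ``moreover'' clause of Theorem \ref{main} --- the count of embeddings of a nice tree with $\ell$ leaves fixed is $O\bigl(n^{(|V(T)|-2\ell+1)/2}\bigr)$ --- applied to $T_j$ together with its attachment vertices in $Q'$. Your proposal never invokes Theorem \ref{main}, which is the engine of the whole section, and without it (or a from-scratch substitute, which you do not supply) the upper bound is simply not established.

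Second, your bookkeeping is internally inconsistent and does not match your own construction. You promote the high-degree vertices of $A$ to $\Theta(n)$-degree hubs, which gives them $\Theta(1)$ possible images, yet you charge each vertex of $A$ a factor $\Theta(\sqrt n)$; and you charge every leaf a factor $\Theta(n)$ even though leaves hanging off degree-two vertices of $A$ live inside a F\"uredi block, where every vertex has only $\Theta(\sqrt n)$ choices. A termwise ledger ``each leaf $\to n$, each $A$-vertex $\to \sqrt n$, each $T_j\to\sqrt n$'' is in fact unachievable: for the tree obtained from the path $u\,m\,v$ by attaching three leaves to each of $u$ and $v$ one has $|L|=6$ and $A=\{m\}$, but $\sum_v d(v)^3=O(n^3)$ in any $K_{2,t}$-free graph forces $\cN(T,G)=O(n^6)$, so the extra $\sqrt n$ for $m$ cannot be collected by any construction. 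The accounting the paper's proof actually delivers is: a factor $n$ for each leaf adjacent to $Q'$, a factor $O(1)$ for each vertex of $Q'$, and a factor $\Theta\bigl(n^{(|V(T_j)|-\ell_j+1)/2}\bigr)$ for each subtree $T_j$ with $\ell_j$ attachment points --- realized on the construction side by gluing a copy of $F(n',t)$ onto $\ell_j$ independent vertices identified with the attachment points and averaging over the choices of fixed leaves in Theorem \ref{main}. You should redo both bounds with this ledger, and in doing so reconcile it with the closed form $n^{|L|+(|A|+s)/2}$ in the statement, which your example above shows cannot be read off term by term.
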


We will show later that the F\"uredi graph contains  $\Theta\left(n^{\frac{|V(T)|+1}{2}}\right)=\Theta\left(n^{\frac{|A|+|L|+1}{2}}\right)$ copies of $T$, thus for non-nice trees $\cN(T,F(n,t))$ does not even have the same order of magnitude as $\ex(n,T,K_{2,t})$.


\medskip

Let us consider now the problem of counting copies of $K_{2,t}$.
Several results of Gerbner and Patk\'os contain this as a subcase; in particular they determined the exact value of $\ex(n,K_{2,t},K_{p,q})$ if $2<p,q\le t$ and $n$ is large enough.

Gy\H ori, Pach and Simonovits \cite{gyps} showed that if $H$ is complete multipartite, then $\ex(n,H,K_k)=\cN(H,G)$ for some complete $(k-1)$-partite graph $G$ on $n$ vertices. $G$ does not have to be balanced, but they showed that if $H$ is $K_{2,2}$ or $K_{2,3}$, then $G$ is balanced, i.e. the Tur\'an graph. It is a straightforward optimization to find the optimal graph for given values of $t$ and $k$, but we are unable to deal with this problem in this generality. Ma and Qiu \cite{mq} showed that for $k=3$, in the case of $t=4$ the Tur\'an graph is not optimal, but is asymptotically optimal. In other words, $\ex(K_{2,4},K_3)\neq \cN(K_{2,t},K_{\lfloor n/2\rfloor,\lceil n/2\rceil})$ but $\ex(K_{2,4},K_3)=(1+o(1)) \cN(K_{2,t},K_{\lfloor n/2\rfloor,\lceil n/2\rceil})$. For larger $t$, $K_{\lfloor n/2\rfloor,\lceil n/2\rceil}$ does not even give the correct asymptotics.

Gerbner and Palmer \cite{GP2019} proved that if $F$ has chromatic number $k$, then $\ex(n,H,F)\le \ex(n,H,K_k)+o(n^k)$ for any $H$. In our case it shows that the complete $(k-1)$-partite $n$-vertex graph $G$ with the most copies of $K_{2,t}$ gives the correct asymptotics for $\ex(n,K_{2,t},F)$. 

Gerbner, Z. Nagy and Vizer \cite{gnv} determined the asymptotics of $\ex(n,K_{2,t},C_{2k})$, generalizing a result of Gerbner, Gy\H ori, Methuku and Vizer \cite{ggymv}. There are too many results on $\ex(n,C_4,F)$ to list them here.

Alon and Shikhelman \cite{as} determined the graphs $F$ with $\ex(n,K_3,F)=O(n)$. Their results also implied a similar characterization where $K_3$ is replaced with any forest. Gerbner and Palmer \cite{GP2019} obtained such a characterization for every cycle.

In this paper we obtain such a characterization for $K_{2,t}$. Moreover, we characterize the possible orders of magnitude below $n^t$. Let $K_{2,r}^{p,q}$ denote the graph we obtain from $K_{2,r}$ if we connect $p$ new vertices to $u$ and $q$ new vertices to $v$, where $u$ and $v$ are the vertices of the partite set of size 2 of $K_{2,r}$. We note that $p,q,r$ can be 0. For integers $k,t\ge 2$, we let $M(k,t)$ denote the complete $k$-partite graph with the most copies of $K_{2,t}$. Clearly, $M(k,t)$ contains $\Theta(n^{t+2})$ copies of $K_{2,t}$. For a graph $H$, we denote by $kH$ the graph consisting of $k$ vertex-disjoint copies of $H$.

\begin{thm}\label{cou}
For any integer $t\ge 2$ and any graph $F$ we have

\begin{displaymath}
\ex(n,K_{2,t},F)=
\left\{ \begin{array}{l l}
0 & \textrm{if\/ $K_{2,t}$ contains $F$},\\
(1+o(1))\cN(K_{2,t},\lfloor\frac{n}{p+q+r+1}\rfloor K_{p+q+r+1}) & \textrm{if\/ $F=K_{2,r}^{p,q}$ with $r\le t$ and $p+q+r>t$},\\
(1+o(1))\cN(K_{2,t},F(q(n),r)) & \textrm{if\/ $F=K_{2,r}^{p,q}$ with $r> t$},\\
\Omega(n^t) & \textrm{if\/ $F$ is any other bipartite graph},\\
(1+o(1))\cN(K_{2,t},M(k-1,t)) & \textrm{if\/ $F$ has chromatic number at least three}.\\
\end{array}
\right.
\end{displaymath}
\end{thm}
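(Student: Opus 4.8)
The plan is to handle the five cases separately; items~1 and~5 are soft, item~4 is a short construction, and the content lies in items~2 and~3.

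\emph{Items 1 and 5.} If $K_{2,t}$ contains $F$, then every copy of $K_{2,t}$ contains a copy of $F$, so an $F$-free graph has no copy of $K_{2,t}$, which is item~1. For item~5, write $k=\chi(F)$. As noted in the introduction, the bound of Gerbner and Palmer together with the theorem of Gy\H ori, Pach and Simonovits (the number of copies of $K_{2,t}$ in a $K_k$-free graph is maximized by a complete $(k-1)$-partite graph) gives $\ex(n,K_{2,t},F)=(1+o(1))\ex(n,K_{2,t},K_k)=(1+o(1))\cN(K_{2,t},M(k-1,t))$, with $\cN(K_{2,t},M(k-1,t))=\Theta(n^{t+2})$ absorbing the error term. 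The matching lower bound is immediate, since $M(k-1,t)$ has chromatic number $k-1<k$ and is therefore $F$-free.

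\emph{Item 4.} Here only a lower bound is needed, and the construction is the complete bipartite graph $K_{2,n-2}$: it contains $\binom{n-2}{t}=\Theta(n^t)$ copies of $K_{2,t}$, all sharing the pair on the side of size two. It remains to see that a bipartite graph $F$ with $F\not\subseteq K_{2,m}$ for every $m$ makes $K_{2,n-2}$ automatically $F$-free for large $n$; and the bipartite graphs that \emph{do} embed into some $K_{2,m}$ are, up to isolated vertices, disjoint unions of graphs of the form $K_{2,r}^{p,q}$ and stars, so they reduce to the analysis of items~1--3. This is where the hypothesis ``any other bipartite graph'' is used.

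\emph{Items 2 and 3: common reductions.} Two observations drive both. First, a \emph{codegree bound}: if an $F$-free graph had two vertices $u,v$ with at least $r+p+q$ common neighbours, then $F$ embeds inside that common neighbourhood (take $r$ of them as the middle of $K_{2,r}$ and $p$, resp.\ $q$, further ones as the pendant vertices at $u$, resp.\ $v$); hence every pair has codegree at most $r+p+q-1$, and $\cN(K_{2,t},G)=\sum_{\{u,v\}}\binom{c_{uv}}{t}$ with all codegrees $c_{uv}\le r+p+q-1$. Second, a \emph{few-bad-pairs lemma}: if $u,v$ have at least $r$ common neighbours and both $\deg(u),\deg(v)$ exceed a constant $C=C(p,q,r)$, one can greedily complete a copy of $F$ (the middle of $K_{2,r}$ from the common neighbours, then the pendants from the remaining, still plentiful, neighbourhoods); so in an $F$-free graph every pair with $c_{uv}\ge r$ has an endpoint of degree at most $C$, and since such a low-degree vertex lies in at most $\binom{C}{2}(r+p+q-1)$ of these pairs, there are only $O(n)$ pairs with $c_{uv}\ge r$, contributing only $O(n)$ copies of $K_{2,t}$. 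For item~3 ($r>t$) the surviving copies of $K_{2,t}$ have centre pair of codegree in $[t,r-1]$, so they behave as in a $K_{2,r}$-free graph; combined with the theorem of Gerbner and Patk\'os that $K_{2,t}$ is $r$-F\"uredi-good this gives the upper bound $(1+o(1))\cN(K_{2,t},F(n,r))$, and the lower bound is $F(n,r)$ itself, which is $K_{2,r}$-free (every pair has at most $r-1$ common neighbours) hence $F$-free.

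\emph{Finishing item 2 and the main obstacle.} For item~2 ($r\le t$ and $p+q+r>t$; note $p+q\ge 1$, else $F=K_{2,r}\subseteq K_{2,t}$) the codegree bound already yields $\cN(K_{2,t},G)=O(n)$, and since $\cN(K_{2,t},\cdot)$ is additive over connected components the sharp bound reduces to the \emph{local} inequality
\[
\cN(K_{2,t},D)\le \frac{|V(D)|}{p+q+r+1}\,\cN(K_{2,t},K_{p+q+r+1})\qquad\text{for every $F$-free graph }D,
\]
whose extremal case is a disjoint union of cliques $K_{p+q+r+1}$ (these are $F$-free, having fewer than $|V(F)|=p+q+r+2$ vertices), giving the construction $\lfloor n/(p+q+r+1)\rfloor K_{p+q+r+1}$. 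I expect this local inequality to be the main obstacle: the structural facts above confine all copies of $K_{2,t}$ in an $F$-free graph to bounded-degree, bounded-codegree configurations, but identifying the clique $K_{p+q+r+1}$ as the per-vertex optimum among \emph{all} such configurations — plausibly via a stability argument ruling out every competing bounded $F$-free piece — is the part that needs genuine work. The analogous sharp point in item~3, namely that the high-codegree pairs and the interaction terms cost only $o(n^2)$ beyond $\ex(n,K_{2,t},K_{2,r})$, is easier but still requires a careful codegree-capping argument.
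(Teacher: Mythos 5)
Your case decomposition is exactly the paper's: items 1 and 5 are handled by the same soft arguments (Gy\H ori--Pach--Simonovits plus Gerbner--Palmer for the non-bipartite case), item 4 by the same construction $K_{2,n-2}$ together with the observation that any bipartite $F$ embeddable in some $K_{2,m}$ is, up to isolated vertices, a $K_{2,r}^{p,q}$ and hence falls under items 1--3, and your treatment of item 3 (codegree at most $r-1$ for pairs of large degree, $O(n)$ copies through low-degree vertices, then $\sum_{\{u,v\}}\binom{c_{uv}}{t}\le\binom{n}{2}\binom{r-1}{t}+O(n)$) is Proposition 3.3 of the paper almost verbatim; the detour through ``$K_{2,t}$ is $r$-F\"uredi-good'' is unnecessary there, since the direct codegree count already gives $(1+o(1))\binom{n}{2}\binom{r-1}{t}=(1+o(1))\cN(K_{2,t},F(q(n),r))$.

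The genuine gap is exactly where you flag it: the sharp upper bound in item 2. Your two structural facts only give $\cN(K_{2,t},G)=O(n)$ (and note that the codegree bound alone gives only $O(n^2)$; you need the few-bad-pairs lemma to get $O(n)$), and your ``local inequality'' $\cN(K_{2,t},D)\le\frac{|V(D)|}{p+q+r+1}\cN(K_{2,t},K_{p+q+r+1})$ is not a reduction at all, since an $F$-free graph need not have bounded components --- taking $D=G$ it is literally the statement to be proved. The paper closes this in Proposition 3.1 by a per-vertex averaging argument you do not have: define an auxiliary multigraph $G'$ on $V(G)$ in which $u$ and $v$ receive one edge for each copy of $K_{2,t}$ having $\{u,v\}$ as its small side, so that $\cN(K_{2,t},G)=e(G')$, and bound the average degree of $G'$ by $(p+q+r)\binom{p+q+r-1}{t}$, the value attained by disjoint copies of $K_{p+q+r+1}$. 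The key claim is that a vertex $v$ with $d_G(v)\ge p+q+r$ sends at most $(p+q+r)\binom{p+q+r-1}{t}$ edges of $G'$ to vertices $w$ with $d_G(w)\ge p+r$ (any nonadjacent such pair with $t$ common neighbours yields an embedding of $K_{2,r}^{p,q}$, and the adjacent exceptional pairs number at most $p+q+r$ and contribute at most $\binom{p+q+r-1}{t}$ multi-edges each), while vertices of $G$-degree at most $p+r-1$ have $G'$-degree at most $\binom{p+r-1}{t}(p+q+r-2)$. This is the idea your ``plausible stability argument'' would need, and it in fact yields the exact value up to an additive $O(1)$, not merely the asymptotics.
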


Let us discuss the missing case.
For a bipartite graph $F$, we denote by $\beta(F)$ the smallest $p$ such that $F$ is a subgraph of $K_{p,q}$ for some $q$. In the above theorem, the $\Omega(n^t)$ bound belongs to the case $\beta(F)\ge 3$. If $\beta(F)\le t$, we have $\ex(n,K_{2,t},F)=\Theta(n^t)$, where the upper bound follows from a result of Gerbner and Patk\'os \cite{gepat} on $\ex(n,K_{2,t},K_{p,q})$.
If $\beta(F)< t$, we have $\ex(n,K_{2,t},F)=(1+o(1))\cN(K_{2,t},K_{\beta(F)-1,n-\beta(F)+1})$, where the upper bound follows from another result of Gerbner and Patk\'os \cite{gepat}. This also implies that for any given $F$, for $t\ge |V(F)|-2$ we have the asymptotics of $\ex(n,K_{2,t},F)$ (assuming in the case $\chi(F)\ge 3$ that we can solve the optimization problem mentioned earlier).

If
$\beta(F)>t$, then by a result of Alon and Shikhelman \cite{as} we have the upper bound $O(n^{t+2-2t/\beta})$. This is also sharp in some cases, see \cite{gepat} for a collection of results on $\ex(n,K_{s,t},K_{p,q})$.

\smallskip

In Section 2, we deal with the case where $K_{2,t}$ is forbidden, we prove Theorems \ref{nicetrees} and \ref{nemnice}, and a corollary. In Section 3 we deal with the case of counting copies of $K_{2,t}$ and prove Theorem \ref{cou} through a series of propositions. Some of those prove slightly more than stated in Theorem \ref{cou}.

\section{Forbidding $K_{2,t}$}

In this section instead of copies of $T$, we will often talk about embeddings of $T$, that create ordered copies of $T$. As it only depends on $T$ how many ordered copies of itself $T$ contains, it will not change anything when we compare the number of copies of $T$ in $G$ and in $F(n,t)$. More precisely, an embedding $f:V(T)\rightarrow V(G)$ is an injective function such that if $uv$ is an edge of $T$, then $f(u)f(v)$ is an edge of $G$. Let $\cN'(T,G)$ denote the number of embeddings of $T$ into $G$. Then $\cN'(T,G)=c(T)\cN(T,G)$ for some $c(T)$ depending only on $T$ (the number of automorphisms). We will also talk about embeddings with some fixed vertices. That means that $f(v_1), \dots, f(v_\ell)$ are already given. More precisely, if $v_1,\dots,v_\ell\in V(T)$ and $x_1,\dots, x_\ell\in V(G)$, then let $\cN'(T,v_1,\dots,v_\ell,G,x_1,\dots,x_\ell)$ denote the number of embeddings $f$ of $T$ into $G$ such that $f(v_i)=x_i$ for every $i\le \ell$.

Let us define embeddings $f$ of $T$ into $F(n,t)$, 
greedily. First we let $f(v)=x$ for some $v\in V(T)$ and $x\in V(F(n,t))$, and then in each step, we pick a vertex of $T$ adjacent to exactly one of the vertices already embedded. It is well-known that we can build any tree this way. When we pick a vertex $y$ that is adjacent to an already embedded vertex $z$, we need to pick a neighbor $f(z)$ in $G$. We pick a neighbor that we have not picked as an image.
Clearly, there are $(1+o(1))\sqrt{(t-1)n}$ ways to pick such an image. The number of embeddings we found is $(1+o(1))(t-1)^{(|V(T)|-1)/2}n^{(|V(T)|+1)/2}$. 


We state the following strengthening of Theorem \ref{nicetrees}. This will help us with the induction. Note that $K_2$ is not a nice tree, but it is $t$-F\"uredi-good. However, the strengthening below does not hold for $K_2$.

\begin{thm}\label{main}
Let $T$ be a nice tree. Then $\ex(n,T,K_{2,t})=(1+o(1))\cN(T,F(n,t))$. Moreover, let $\ell\ge 1$ be an integer and $v_1,\dots,v_\ell$ be leaves of $T$ such that any pair of them has distance more than 2. Then for any $K_{2,t}$-free $n$-vertex graph $G$ and $x_1,\dots, x_\ell\in V(G)$, we have 
$\cN'(T,v_1,\dots,v_\ell,G,x_1,\dots,x_\ell)\le (1+o(1))(t-1)^{(|V(T)|-1)/2}n^{(|V(T)|-2\ell+1)/2}$.
\end{thm}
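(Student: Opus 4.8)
The plan is to prove both parts simultaneously by induction on the structure of a nice tree $T$, using the greedy decomposition $v_1,\dots,v_m$ of the vertices of $A$ of degree more than $2$ as a guide, together with the key structural fact about $F(n,t)$ and about $K_{2,t}$-free graphs: any two vertices have at most $t-1$ common neighbors. First I would set up the base case: if $T$ is a star $S_r$ with $r\ge 2$, then $B$ is a single center of degree $r$ and $A$ is empty, and a nice tree that is a star has all leaves at pairwise distance $2$, so at most one $v_i$ can be specified; the count of embeddings is then a product of falling factorials in the degree of the center, which is at most $n$ in any graph, and the claimed bound follows directly. For a general nice tree, the idea is to peel off a well-chosen vertex or a leaf edge and reduce $|V(T)|$ while controlling the exponent: each leaf we embed costs a factor $(1+o(1))\sqrt{(t-1)n}$ if its neighbor is already placed (by the degree bound in a $K_{2,t}$-free graph, which can have max degree up to $n-1$, so actually here we must be more careful — a leaf attached to an already-placed vertex contributes at most $n$ choices, not $\sqrt{n}$), whereas a vertex of $A$ of degree $2$ that forms a path of length $2$ between two already-placed images contributes only $t-1$ choices, which is where the square-root savings come from.

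The heart of the argument is the amortization: walking through $T$ so that each vertex of $A$ is embedded as the middle of a path whose two endpoints are already fixed (this is always possible because of how the greedy process builds $A$ from vertices with at most two $B$-neighbors), each such vertex contributes $O(1)$ rather than $O(\sqrt n)$, while each vertex of $B$ contributes $O(\sqrt n)$ when it is a leaf freshly attached, and crucially the $\ell$ pre-specified leaves contribute $O(1)$ each since their images are given. Counting exponents: $|V(T)| = |A| + |B|$, and in a nice tree every vertex of $B$ is a leaf, so $|B| = |L|$; the total exponent of $n$ from an unconstrained embedding is $(|A| + |L| + 1)/2 = (|V(T)|+1)/2$ as claimed in the excerpt, and each of the $\ell$ fixed leaves removes one $O(\sqrt n)$ factor, i.e. lowers the exponent by $1$, giving $(|V(T)|+1)/2 - \ell = (|V(T)| - 2\ell + 1)/2$. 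The constant $(t-1)^{(|V(T)|-1)/2}$ comes from tracking: there are $(|V(T)|-1)$ edges in $T$; roughly half the "building steps" are leaf-attachments giving $\sqrt{(t-1)n}$ and the path-closing steps give $t-1$, and these multiply to the stated constant.

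The main obstacle I expect is making the induction hypothesis strong enough and the peeling operation clean enough to handle the distance-more-than-$2$ condition on the $v_i$ simultaneously with the structural reduction. Concretely: when I remove a leaf $u$ (or a pendant subtree hanging off a $B$-vertex), I must check the reduced tree $T'$ is still nice, still has at least $3$ vertices or is handled by a base case, and that the surviving $v_i$'s are still pairwise at distance more than $2$ in $T'$ — and if $u$ is the unique neighbor realizing the distance-$2$ separation of two of the $v_i$, I must instead remove something else. A second subtlety is that a $K_{2,t}$-free graph, unlike $F(n,t)$, need not be almost-regular, so a leaf attached to a high-degree vertex could naively contribute up to $n$ choices; the fix is that in the amortized counting we never attach a leaf to a freshly-placed high-degree vertex without having "paid" for that vertex's degree via the $\sqrt n$ allotted to it — so the correct bookkeeping pairs each $B$-vertex with one $\sqrt n$ factor and each $A$-vertex (plus the final closing of $A$-vertices into paths) with a $t-1$ factor, and one must verify this pairing is always achievable by choosing the embedding order to process each $A$-vertex only after both its relevant neighbors are placed. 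I would organize the write-up so that this ordering is extracted once from the greedy construction of $A$, and then the numeric bound falls out by multiplying the per-step bounds and invoking the co-degree bound at each $A$-step and a trivial $\le n$ bound at each $B$-step, with the $\ell$ fixed leaves simply omitted from the product.
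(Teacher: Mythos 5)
Your overall strategy (induction on the tree, charging $t-1$ via the co-degree bound to vertices placed as common neighbors of two already-embedded vertices, and amortizing to $\sqrt{(t-1)n}$ per vertex) is the same as the paper's, but two of your concrete steps fail.

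First, the base case is wrong. A star $K_{1,r-1}$ with $r\ge 4$ is \emph{not} a nice tree: its center is a non-leaf vertex all of whose $r-1\ge 3$ neighbors lie in $B$, so the greedy process never moves it out of $B$, and $B$ then contains a non-leaf. (Indeed $\ex(n,S_r,K_{2,t})=\Theta(n^{r-1})$, which is not of the form $n^{(|V|+1)/2}$.) The only nice star is $P_3$, and there $A$ is the center and $B$ the two leaves --- the opposite of your description. The base case the induction actually needs is the spider with all legs of length two: embed the leaves and the center arbitrarily, then each middle vertex is a common neighbor of the center and a leaf.

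Second, and more seriously, your amortization does not work as stated. You claim one can order the vertices so that \emph{every} vertex of $A$ is embedded as the middle of a path between two already-placed images, contributing $t-1$, while each $B$-vertex contributes $O(\sqrt n)$. Such an ordering does not exist in general: for $P_5=v_1v_2v_3v_4v_5$ (which is nice, with $A=\{v_2,v_3,v_4\}$), the constraints ``$v_2$ after $v_1,v_3$'', ``$v_3$ after $v_2,v_4$'', ``$v_4$ after $v_3,v_5$'' are cyclic. Moreover the resulting product $n^{|L|/2}(t-1)^{|A|}$ does not match the target $(t-1)^{(|V(T)|-1)/2}n^{(|V(T)|+1)/2}$ that you yourself compute (for $P_5$ it would give $n(t-1)^3$ versus the true $\Theta(n^3)$), so if the bookkeeping went through it would contradict the lower bound from $F(n,t)$. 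You also correctly flag that a leaf attached to an already-placed vertex costs up to $n-1$ (degrees in a $K_{2,t}$-free graph are unbounded), but your proposed fix of ``paying via the $\sqrt n$ allotted to that vertex'' is not carried out. The working version of the idea, as in the paper, is the reverse: each such leaf (and, along internal paths, every other internal vertex) is embedded \emph{first}, arbitrarily among all $n$ vertices at cost $n$, and is paired with exactly one $A$-vertex that is then placed as a common neighbor at cost $t-1$; each pair costs $(t-1)n=\bigl(\sqrt{(t-1)n}\bigr)^2$, and a fixed leaf deletes one full factor of $n$ from its pair, dropping the exponent by $1$. Setting up the recursion so that this pairing always exists (choosing, at each $A$-vertex $v$ with an embedded leaf neighbor, either a second leaf neighbor $v''$ or a deeper subtree to process first) is exactly the content the proof needs and the part your sketch leaves unresolved.
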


\begin{proof}

We use induction on $|V(T)|$. 
We deal first with the case $T$ has a vertex $w$ such that each leaf is at distance two from $w$ (i.e. $T$ is a spider with legs of length two). This proves the base case $|V(T)|=3$. First we embed the leaves that are not fixed and $w$, arbitrarily, this can be done at most $n^{\frac{|V(T)|+1}{2}-\ell}$ ways. Then each of the other $(|V(T)|-1)/2$ vertices is the common neighbor of $w$ and a leaf, thus can be embedded to at most $t-1$ vertices.


Let us assume that $|V(T)|\ge 4$.
We will define an embedding $g$ of $T$ into $G$ recursively such that $g(v_i)=x_i$ for every $i\le \ell$. It is easy to see that each such embedding of $T$ into $G$ can be found with the recursive embeddings defined below, thus an upper bound on the number of such embeddings is an upper bound on $\cN'(T,v_1,\dots,v_\ell,G,x_1,\dots,x_\ell)$.

Let us consider a subtree $T'$ with a vertex $v\in A$ such that a leaf neighbor $v'$ of $v$ is already embedded. If $v$ has another leaf neighbor, we denote that neighbor by $v''$. By deleting $v$ from $T'$ we obtain several subtrees $T_i'$. We label a subtree with the most vertices to be $T_1'$. Each subtree $T_i'$ has a vertex $w_i$ adjacent to $v$. If $v''$ does not exist, we can assume that $T_1'$ has more than 2 vertices (otherwise $T$ is a spider with legs of length two and we are done).

Now we are ready to define the recursive embedding $g$ of $T'$.
We first embed $v''$
arbitrarily. If there is no $v''$, we will embed $T_1'$ first. There are two possibilities. If there is a fixed vertex $u'$ in $T_1'$, we use that as the first embedded vertex. Then $u'$ is a leaf and has a neighbor $u$. Then $u$ is not a leaf, thus $u\in A$, hence we can apply the recursion with $u$ playing the role of $v$ and $u'$ playing the role of $v'$, and embed $T_1'$.

If $T_1'$ does not have a fixed vertex, then we pick a vertex $w_1'$ of $T_1'$ in $A$ with a leaf neighbor $w_1''$. Such vertices exist as $T_1'$ has leaves and also has at least one vertex ($w_1$) in $A$. We embed $w_1''$ first. Then $w_1'$ has an already embedded leaf neighbor, and at most one other neighbor in $B$, thus we can apply the recursion and embed $T_1'$.

After embedding $v''$ or $T_1'$, we pick $g(v)$ as a common neighbor of $g(v')$ and $g(v'')$, or if $v''$ does not exist, we pick $g(v)$ as a common neighbor of $g(v')$ and $g(w_1)$. Consider a tree $T_i'$ not yet embedded. We have $w_i\in A$, and $w_i$ has a neighbor $v$ already embedded. In the tree that we obtain by adding $v$ to $T_i'$, $v$ is a leaf, thus we can embed  $T_i'$ recursively.

Let us count the embeddings. 
By induction, we know the situation in each $T_i'$. Assume that $T_i'$ contains $\ell_i$ fixed vertices. For $i>1$, we embed $T_i'$ together with $v$, thus we have $|V(T_i')|+1$ vertices, and $\ell_i+1$ of them are fixed. Therefore, there are at most $(1+o(1))(t-1)^{|V(T_i')|/2}n^{(|V(T_i')|-2\ell_i)/2}$ embeddings of $T_i'$. 

If $v''$ exists, we have the same bound for $T_1'$, and a factor of at most $t-1$ for choosing $g(v)$ as a common neighbor of $g(v')$ and $g(v'')$. Furthermore, if $v''$ is not fixed, we have an additional factor of at most $n$ for choosing $v''$. 
The $\ell_i$'s add up to $\ell-1$ if $v''$ is not fixed an to $\ell-2$ if $v''$ is fixed. The $|V(T_i')|$'s add up to $|V(T)|-3$, thus we obtain the desired bound. 

If $v''$ does not exist, then the $|V(T_i')|$'s add up to $|V(T)|-2$. We can embed $T_1'$ at most $(1+o(1))(t-1)^{(|V(T_1')|-1)/2}n^{(|V(T_1')|-2\ell_1+1)/2}$ ways. Again, we have a factor of $t-1$ for choosing $g(v)$, which gives the desired bound. 
\end{proof}

The above theorem easily implies that we can attach nice trees to $t$-F\"uredi-good graphs to obtain other $t$-F\"uredi-good graphs.

\begin{corollary}
Let $H$ be a $t$-F\"uredi-good graph, $T$ be a nice tree and $H'$ be obtained from $H$ and $T$ by identifying an arbitrary vertex of $H$ and a leaf of $T$. Then $H'$ is $t$-F\"uredi-good.
\end{corollary}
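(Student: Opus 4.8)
The plan is to reduce the statement about $H'$ to a combination of the hypothesis that $H$ is $t$-F\"uredi-good and the quantitative bound in Theorem~\ref{main}. Write $H'$ as the union of $H$ and a nice tree $T$ glued along a single vertex: let $z$ be the chosen vertex of $H$ and let $v_1$ be the leaf of $T$ identified with $z$. For the lower bound, I would simply observe that $\cN(H',F(n,t))$ already has the right order of magnitude: embed $H$ into $F(n,t)$ in one of the $(1+o(1))\cN(H,F(n,t))$ ways, and then greedily extend along $T$ starting from the image of $z=v_1$, each new vertex of $T$ having $(1+o(1))\sqrt{(t-1)n}$ choices of image (as in the greedy embedding discussion before Theorem~\ref{main}), giving a factor $(1+o(1))(t-1)^{(|V(T)|-1)/2}n^{(|V(T)|-1)/2}$; one must also note that the number of images to avoid is bounded, so this does not affect the asymptotics. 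Since $H$ is $t$-F\"uredi-good this is $(1+o(1))\cN(H',F(n,t))$, so $\ex(n,H',K_{2,t})\ge(1+o(1))\cN(H',F(n,t))$.

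The substance is the upper bound. Let $G$ be any $K_{2,t}$-free $n$-vertex graph, and count embeddings $\cN'(H',G)$. Every embedding of $H'$ restricts to an embedding $f$ of $H$; there are at most $\cN'(H,G)\le(1+o(1))c(H)\cN(H,F(n,t))$ such restrictions (using that $H$ is $t$-F\"uredi-good, after passing from copies to embeddings via the constant $c(H)$). Having fixed $f$, the embedding of $H'$ is determined by an embedding of $T$ into $G$ sending $v_1$ to the fixed vertex $x_1:=f(z)$; crucially the other vertices of $T$ are new, so no further vertices of $T$ are pinned. Here I would apply Theorem~\ref{main} with $\ell=1$: since $v_1$ is a leaf of the nice tree $T$ (a single fixed leaf trivially satisfies the pairwise-distance condition), the number of such extensions is at most $(1+o(1))(t-1)^{(|V(T)|-1)/2}n^{(|V(T)|-1)/2}$. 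Multiplying, $\cN'(H',G)\le(1+o(1))\,c(H)\cN(H,F(n,t))\cdot(t-1)^{(|V(T)|-1)/2}n^{(|V(T)|-1)/2}$, and this matches $(1+o(1))c(H')\cN(H',F(n,t))$ by the lower-bound computation run inside $F(n,t)$ itself. Dividing by $c(H')$ converts back to copies and yields $\ex(n,H',K_{2,t})\le(1+o(1))\cN(H',F(n,t))$.

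The one point needing care — and the main obstacle — is verifying that the two counts genuinely agree asymptotically, i.e. that $\cN(H',F(n,t)) = (1+o(1))\cN(H,F(n,t))\cdot(t-1)^{(|V(T)|-1)/2}n^{(|V(T)|-1)/2}/(\text{automorphism factors})$. This requires checking that in $F(n,t)$ essentially all copies of $H'$ arise by taking a copy of $H$ and greedily growing a copy of $T$ out of it with $(1+o(1))\sqrt{(t-1)n}$ choices per vertex, with negligibly many copies lost to collisions among images or to the handful of special vertices of $F(n,t)$; this is the same bookkeeping already used implicitly in establishing that $P_k$, $C_k$ and $K_{2,s}$ are $t$-F\"uredi-good, and in the greedy-embedding paragraph preceding Theorem~\ref{main}, so it is routine but should be stated. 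A secondary subtlety is that Theorem~\ref{main} as stated fixes \emph{leaves} of $T$; since $v_1$ is by construction a leaf of $T$, this hypothesis is met, and no extra argument is needed.
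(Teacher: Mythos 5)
Your proposal is correct and follows essentially the same route as the paper: embed $H$ first (using that $H$ is $t$-F\"uredi-good), then embed $T$ with its identified leaf fixed via Theorem~\ref{main} with $\ell=1$, and note that $F(n,t)$ asymptotically attains both factors simultaneously. The paper's own proof is a two-line version of exactly this argument, leaving implicit the bookkeeping you spell out about the product of the two counts matching $\cN(H',F(n,t))$.
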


\begin{proof}
First we embed $H$, and then embed $T$ with one of its leaves fixed. By definition and by Theorem \ref{main}, in both cases $F(n,t)$ asymptotically maximizes the number of embeddings.
\end{proof}

Let us turn to other trees. Recall that Theorem \ref{nemnice} states that $\ex(n,T,K_{2,t})=\Theta(n^{|L|+\frac{|A|+s}{2}})$.



\begin{proof}[Proof of Theorem \ref{nemnice}]
Let us start with the upper bound and let $G$ be an $n$-vertex $K_{2,t}$-free graph. We will take a copy of $T$ the following way. First we pick the leaves adjacent to $Q'$, we have at most linearly many choices for each. If a vertex of $T$ is connected to at least 2 vertices picked earlier, then there is a constant number of ways to pick them. We pick those vertices. Assume that no such vertex is left. We claim that we have picked every vertex of $Q'$. Recall that $Q'$ is a forest. After we have picked some of its vertices, the remaining vertices form a forest $Q''$. If $Q''$ is not empty, then there is a leaf $w$ of $Q''$. Observe that $w$ is a non-leaf vertex of $T$ and $w\in B$. This shows that $w$ is connected to at least 3 vertices of $B$, and at most one of those is in $Q''$. Hence $w$ has at least two neighbors already picked, thus we also pick $w$. This shows that we picked every vertex of $B$, and we had $O(n^\ell)$ choices.

Let $T_i$ be a subtree with neighbors $v_1,\dots,v_\ell$, then let $T_i'$ be $T_i$ together with $v_1,\dots,v_\ell$. The number of ways to pick $T_i'$ with $v_1,\dots,v_\ell$ fixed is $\Theta(n^{(|V(T_i')|-2\ell+1)/2})$ 
by Theorem \ref{main}. The $|V(T_i')|$'s add up to $|A|$, and we have an extra $n^{1/2}$ factor for every $i$, which completes the proof of the upper bound.

To prove the lower bound, we define a $K_{2,t}$-free graph $G_0$. We let $n'=(n-|V(Q')|)/|V(T)$. We take a copy of $Q'$, and one copy of each $T_j$. Consider a vertex $v$ of $Q'$. If $v$ has a leaf neighbor, we add $n'$ new vertices and connect them to $v$. 
Let us consider the subtrees $T_j$ now and let $v_1,\dots,v_\ell$ be the neighbors of vertices of $T_j$ in $Q'$. We take a copy of $F(n',t)$, and we will pick $\ell$ independent vertices in it. We identify those $\ell$ vertices with $v_1,\dots,v_\ell$. We pick the $\ell$ independent vertices of $F'(n,t)$ such a way that it maximizes the number of embeddings of $T_j$ with the vertices $v_i$ connected to the appropriate vertices of the embedded copy of $T_j$. We repeat this for every $j$ (with different copies of $F(n',t)$).

The resulting graph $G_0$ has at most $n$ vertices as we replace at most $|V(T)|$ vertices with $n'$ new vertices.
It is also easy to see that $G_0$ is $K_{2,t}$-free. Indeed, the only cycles in $G_0$ are inside the vertex-disjoint copies of $F(n',t)$, which are $K_{2,t}$-free.

Let us count the number of embeddings of $T$ into $G_0$. For each leaf adjacent to a vertex of $Q'$, we have linearly many choices. 

We will show that the number of ways to pick $T_j'$ in $F(n',t)$ is $\Omega(n^{(|V(T)|-2\ell+1)/2})$, completing the proof. Indeed, without fixing the $\ell$ leaves there are $\Theta(n^{(|V(T)|+1)/2})$ ways to pick $T_j'$ by Theorem \ref{main}, thus there must be a way to fix the leaves such that we can pick the remaining part of $T'_j$ (i.e. $T_j)$ in $\Omega(n^{(|V(T)|-2\ell+1)/2})$ ways.
\end{proof}

\section{Counting $K_{2,t}$}
In the section we prove Theorem \ref{cou}. 
First we deal with the linear case, where we also obtain an exact result for infinitely many values of $n$. Recall that $K_{2,r}^{p,q}$ is obtained by connecting $p$ and $q$ leaves to the two vertices in the partite set of size two of $K_{2,r}$.

\begin{proposition}\label{nembipklikk}
Let $r\le t$ and $p+q+r> t$. Then $\ex(n,K_{2,t},K_{2,r}^{p,q})=\lfloor\frac{n}{p+q+r+1}\rfloor\cN(K_{2,t},K_{p+q+r+1})+O(1)$. Moreover, if $p+q+r+1$ divides $n$, then $\ex(n,K_{2,t},K_{2,r}^{p,q})=\frac{n}{p+q+r+1}\cN(K_{2,t},K_{p+q+r+1})$.
\end{proposition}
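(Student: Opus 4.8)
For the lower bound I would take the obvious construction: with $D:=p+q+r+1$, let $G_0$ be the disjoint union of $\lfloor n/D\rfloor$ copies of $K_D$ together with $n-D\lfloor n/D\rfloor<D$ extra vertices (placed in one further clique, say). Since $K_{2,r}^{p,q}$ is connected on $D+1$ vertices while every component of $G_0$ has at most $D$ vertices, $G_0$ is $K_{2,r}^{p,q}$-free; and since $D\ge t+2$ we have $\cN(K_{2,t},K_D)>0$, so $G_0$ contains $\lfloor n/D\rfloor\cN(K_{2,t},K_D)$ copies of $K_{2,t}$ inside the $K_D$'s (plus $O(1)$ more from the leftover clique). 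When $D\mid n$ this is exactly $\frac nD\cN(K_{2,t},K_D)$.

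For the upper bound, let $G$ be an $n$-vertex $K_{2,r}^{p,q}$-free graph. The first step is a clean \emph{bounded codegree} lemma: any two vertices $u,v$ have at most $p+q+r-1=D-2$ common neighbours. Indeed, if $u$ and $v$ had a set $S$ of $p+q+r$ common neighbours, partition $S$ into parts of sizes $r,p,q$ and use them, together with $u,v$, as the $w_i$'s, $a_j$'s and $b_k$'s of a copy of $K_{2,r}^{p,q}$ (extra edges among these $p+q+r+2$ vertices are harmless), a contradiction. The second, bookkeeping, step is to reduce to counting \emph{rich pairs}, i.e.\ pairs $\{u,v\}$ with at least $t$ common neighbours: every copy of $K_{2,t}$ is obtained by choosing a rich pair and a $t$-subset of its common neighbourhood, the rich pair being recovered uniquely from the copy when $t\ge3$ and in exactly two ways when $t=2$. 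Combined with the codegree lemma this gives $\cN(K_{2,t},G)\le c_t\binom{D-2}{t}\cdot r(G)$, where $r(G)$ is the number of rich pairs and $c_t=1$ for $t\ge3$, $c_t=\tfrac12$ for $t=2$. Since $\cN(K_{2,t},K_D)=c_t\binom D2\binom{D-2}{t}$, it suffices to prove
\[
r(G)\ \le\ \left\lfloor \tfrac nD\right\rfloor\binom D2 + O(1).
\]

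The heart of the argument, and the step I expect to be the main obstacle, is this last inequality — not getting $O(n)$, which is comparatively soft, but pinning the constant exactly. The plan is to analyse the auxiliary ``rich graph'' $R$ on $V(G)$ whose edges are the rich pairs. Using the codegree lemma one shows that locally around a rich pair the structure of $G$ is forced to be essentially a clique on at most $D$ vertices: if $u,v$ have $\ge t$ common neighbours then $\{u,v\}$ together with $N(u)\cap N(v)$ is small, and two such near-cliques cannot share many vertices, for otherwise one recovers a $K_{2,r}^{p,q}$ much as in the codegree lemma. One also separates off the vertices of small degree, which lie in only $O(1)$ rich pairs each (again by the codegree lemma applied to two of their neighbours), and shows that any configuration with large overlap of near-cliques — e.g.\ many $K_{2,t}$'s glued along a common degree-two vertex — is necessarily far from extremal. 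A careful discharging/counting argument then yields the bound $r(G)\le\lfloor n/D\rfloor\binom D2+O(1)$, with equality analysis showing the extremal graphs are (essentially) disjoint unions of $K_D$'s; when $D\mid n$ this forces the additive $O(1)$ to vanish, giving the stated exact value. Feeding this back through Step 2 gives $\cN(K_{2,t},G)\le c_t\binom{D-2}t\bigl(\lfloor n/D\rfloor\binom D2+O(1)\bigr)=\lfloor n/D\rfloor\cN(K_{2,t},K_D)+O(1)$, completing the proof.
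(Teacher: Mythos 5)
Your lower bound and your first two steps are fine: the codegree lemma (any two vertices of a $K_{2,r}^{p,q}$-free graph have at most $p+q+r-1$ common neighbours, since a set of $p+q+r$ common neighbours can be split into the $r$ common vertices and the two pendant classes of a $K_{2,r}^{p,q}$) is correct and clean, and the reduction $\cN(K_{2,t},G)\le c_t\binom{D-2}{t}r(G)$ is valid. The problem is the inequality you yourself identify as the heart of the matter, $r(G)\le\lfloor n/D\rfloor\binom{D}{2}+O(1)$: you give no proof of it, only a plan, and in fact it is \emph{false}. Take $t=2$, $r=2$, $p=3$, $q=0$, so $F=K_{2,2}^{3,0}$ and $D=6$. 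A copy of $F$ contains a vertex of degree $5$, so the $4$-dimensional hypercube $Q_4$ (maximum degree $4$) is $F$-free; every pair of its vertices at Hamming distance $2$ has exactly $2$ common neighbours, so $Q_4$ has $16\cdot\binom{4}{2}/2=48$ rich pairs on $16$ vertices. Disjoint copies of $Q_4$ therefore give an $F$-free $n$-vertex graph with $3n+O(1)$ rich pairs, while your claimed bound is $\lfloor n/6\rfloor\binom{6}{2}+O(1)=\tfrac{5n}{2}+O(1)$. (More generally, $Q_d$ is $K_{2,2}^{d-1,0}$-free with $\tfrac{d(d-1)}{4}n$ rich pairs against a claimed bound of $\tfrac{d+1}{2}n$, so the ratio can be made arbitrarily bad.) Thus the route through an \emph{unweighted} count of rich pairs cannot work: Step 2 charges every rich pair the maximal weight $\binom{D-2}{t}$, whereas in examples like $Q_d$ there are many rich pairs each of codegree only $t$, contributing one copy of $K_{2,t}$ apiece.

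The paper avoids exactly this loss by keeping the weights from the start: it forms an auxiliary multigraph on $V(G)$ in which $u,v$ receive $\binom{\mathrm{codeg}(u,v)}{t}$ parallel edges, so that the number of edges equals $\cN(K_{2,t},G)$, and then bounds the \emph{average} degree of this multigraph by $(p+q+r)\binom{p+q+r-1}{t}$, the value attained by disjoint copies of $K_D$. The structural input is a statement about $G$-degrees rather than codegrees: if $d_G(v)\ge p+q+r$, then every rich partner $w$ of $v$ with $d_G(w)\ge p+r$ must be one of the at most $p+q+r$ neighbours of $v$ (otherwise one assembles a $K_{2,r}^{p,q}$ from $r$ common neighbours, $p$ further neighbours of one vertex and $q$ of the other), while vertices of degree at most $p+r-1$ have multigraph degree at most $\binom{p+r-1}{t}(p+q+r-2)$, which is below the target average. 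In the weighted count the low-degree vertices are automatically harmless because $\mathrm{codeg}(u,v)\le\min(d_G(u),d_G(v))$ caps their multiplicities; in your unweighted count they are precisely what breaks the bound. To salvage your outline you would have to weight each rich pair by $\binom{\mathrm{codeg}}{t}$, at which point you are reproving the paper's averaging argument.
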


\begin{proof}
The lower bound is obtained by $\lfloor\frac{n}{p+q+r+1}\rfloor$ vertex disjoint copies of $K_{p+q+r+1}$. 

For the upper bound, consider an $n$-vertex $K_{2,r}^{p,q}$-free graph $G$. We will define an auxiliary multigraph $G'$ on the same vertex set. Two vertices $u$ and $v$ are connected by $m$ edges if there are $m$ copies of $K_{2,t}$ in $G$ where $u$ and $v$ form the partite set of size 2. Clearly, the number of edges in $G'$ is equal to the number of copies of $K_{2,t}$ in $G$. Therefore, it is enough to bound the average degree in $G'$.

We claim that it is enough to
show that the average degree in $G'$ is at most $\binom{p+q+r-1}{t}(p+q+r)$.
Indeed, if $G$ consists of $\lfloor\frac{n}{p+q+r+1}\rfloor$ vertex disjoint copies of $K_{p+q+r+1}$, then the average degree in $G'$ is at most $\binom{p+q+r-1}{t}(p+q+r)$, with equality if $p+q+r+1$ divides $n$.

Observe that for a vertex $v$ with degree $d$ in $G$, its degree is at most $\binom{d}{t}(p+q+r-2)$ in $G'$, as we pick $t$ neighbors of $v$, and then those $t$ neighbors have at most $p+q+r-1$ common neighbors, we pick one of them that is different from $v$. 

\begin{clm}
If $d_G(v)\ge p+q+r$, then in $G'$ at most $(p+q+r)\binom{p+q+r-1}{t}$ edges go from $v$ to vertices $w$ with $d_G(w)\ge p+r$.
\end{clm}

\begin{proof}[Proof of Claim]
 If $v$ is connected to a vertex $w$ in $G'$, this means that $v$ and $w$ have $t$ common neighbors in $G$. Assume first that $vw\not\in E(G)$. If $d_G(w)\ge p+r$, then we pick $p$ common neighbors of $v$ and $w$, $r$ other neighbors of $w$, and $v$ still has $q$ neighbors left. This way we obtain a $K_{2,t}^{p,q}$, a contradiction. Thus $d_G(w)\le p+r-1$.
 
Assume now that $vw\in E(G)$. If $d_G(v)\ge p+q+r+1$, the above reasoning still works and we obtain that $d_G(w)\le p+r-1$. Finally, let us assume that $d_G(v)=p+q+r$ and $w$ is one of the $p+q+r$ neighbors of $v$ in $G$. Then there are at most $\binom{p+q+r-1}{t}$ copies of $K_{2,t}$ where $v$ and $w$ are in the partite set of size 2, i.e. there are at most $\binom{p+q+r-1}{t}$ edges between $v$ and $w$ in $G'$.
\end{proof}

Let us return to the proof of the proposition. Let $A$ denote the set of vertices with degree at least $p+q+r$ in $G$, and $B$ denote the set of vertices with degree at most $p+r-1$. Then in $G'$, the vertices of $B$ have degree at most $\binom{p+r-1}{t}(p+q+r-2)$. For every vertex $v$ of $A$, in $G'$ all but $\binom{p+q+r-1}{t}(p+q+r)$ edges incident to $v$ go to a vertex of $B$. This implies that in $G'$ all but $\binom{p+q+r-1}{t}(p+q+r)|A|$ edges incident to $A$ go to $B$, thus there are at most $\binom{p+q+r-1}{t}(p+q+r)|A|+\binom{p+r-1}{t}(p+q+r-2)|B|$ edges incident to $A\cup B$. This shows that the average degree in $G'$ is at most $\binom{p+q+r-1}{t}(p+q+r)$ on $A\cup B$. The other vertices have degree at most $p+q+r-1$ in $G$, thus degree at most $\binom{p+q+r-1}{t}(p+q+r-2)$ in $G'$, finishing the proof.
\end{proof}

We remark that it is easy to extend this proof to show that in a $K_{2,r}^{p,q}$-free graph with $\ex(n,K_{2,t},K_{2,r}^{p,q})$ copies of $K_{2,t}$, all but $O(1)$ of the vertices form vertex-disjoint copies of $K_{p+q+r+1}$. Indeed, all but $O(1)$ vertices must be in $A$ and have $p+q+r-1$ common neighbors (as other vertices have smaller degree in $G'$). In other words, we take almost $n/(p+q+r+1)$ copies of $K_{p+q+r+1}$, and then the graph with the most copies $K_{2,t}$ on the remaining $O(1)$ vertices. What is left is to determine $\ex(n,K_{2,t},K_{2,r}^{p,q})$ for small values of $n$.

A natural idea is to take $\lfloor n/(p+q+r+1)\rfloor$ copies of $K_{p+q+r+1}$ and a smaller clique on the remaining vertices, but this construction is not always optimal. Consider e.g. $\ex(14,K_{2,7},K_{2,2}^{3,3})$. The above construction contains 36 copies of $K_{2,7}$, while another $K_{2,2}^{3,3}$-free graph $K_{7,7}$ contains 42 copies.

\begin{proposition}\label{kvad}
Let $r>t>1$. Then $\ex(n,K_{2,t},K_{2,r}^{p,q})=(1+o(1))\cN(K_{2,t},F(q(n),r))=(1+o(1))\binom{n}{2}\binom{r-1}{t}$.
\end{proposition}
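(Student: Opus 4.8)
The plan is to establish matching upper and lower bounds for $\ex(n,K_{2,t},K_{2,r}^{p,q})$ when $r>t>1$, showing both equal $(1+o(1))\binom{n}{2}\binom{r-1}{t}$, with the Füredi graph $F(q(n),r)$ realizing the lower bound. For the \textbf{lower bound}, I would simply take $G=F(q(n),r)$. This graph is $K_{2,r}$-free, hence $K_{2,r}^{p,q}$-free (any subgraph of $K_{2,r}^{p,q}$ that contains a $K_{2,r}$-free graph as a forbidden configuration is automatically avoided). To count copies of $K_{2,t}$ in $F(q(n),r)$: every pair of vertices $u,v$ has exactly $r-1$ common neighbors (by Füredi's property quoted in the introduction), except for the $O(q)$ special vertices and pairs involving them, which contribute a negligible $O(n^{3/2})$ correction. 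So the number of $K_{2,t}$ copies with a fixed pair $\{u,v\}$ as the size-2 side is $\binom{r-1}{t}$ for all but $O(q)=O(\sqrt n)$ pairs, giving $(1-o(1))\binom{n}{2}\binom{r-1}{t}$ copies; no copy is double-counted since $t<r$ means the size-$t$ side cannot itself host another size-2 pair with $r-1\ge t$ common neighbors inside it in a way that collides (and in any case a $K_{2,t}$ copy determines its size-2 side when $t\neq 2$; for $t=2$ a routine factor of $3$ is absorbed, but here one should just note $r>t=2$ forces the same count up to the automorphism bookkeeping already handled by working with unlabeled copies). Using $F(q(n),r)$ on roughly $n$ vertices rather than exactly $n$ changes nothing asymptotically, as observed in the introduction.

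For the \textbf{upper bound}, let $G$ be an $n$-vertex $K_{2,r}^{p,q}$-free graph and reuse the auxiliary multigraph idea from Proposition~\ref{nembipklikk}: form $G'$ on $V(G)$ where $u,v$ get as many parallel edges as there are $K_{2,t}$ copies of $G$ with $\{u,v\}$ the size-2 side, so $e(G')=\cN(K_{2,t},G)$ up to the bounded automorphism factor. The key structural claim is that $G$ itself is ``almost'' $K_{2,r}$-free in the sense that $\sum_{\{u,v\}} \binom{c(u,v)}{t}$ is at most $(1+o(1))\binom{n}{2}\binom{r-1}{t}$, where $c(u,v)$ is the number of common neighbors of $u$ and $v$. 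To see this, observe first that $G$ is $K_{2,r}$-free \emph{except} for pairs $u,v$ both of low degree: if $c(u,v)\ge r$ and $\deg_G(u),\deg_G(v)$ are large enough (at least $p+r$ and $q+r$ respectively, or more carefully: enough private neighbors to attach $p$ extra leaves to $u$ and $q$ to $v$ disjointly from each other and from a chosen $K_{2,r}$), then $G$ contains $K_{2,r}^{p,q}$. Hence for pairs with both endpoints of degree $\ge p+q+r$ we get $c(u,v)\le r-1$. The vertices of degree $<p+q+r$ number at most... actually this is the subtle point, so I defer it.

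The \textbf{main obstacle} is controlling the contribution of pairs $u,v$ where at least one endpoint has small degree, or more precisely where $c(u,v)\ge r$ is still possible because too few private neighbors remain to build the pendant leaves. Low-degree vertices (degree $<p+q+r$, say) individually contribute only $O(1)$ to their $G'$-degree and there can be up to $n$ of them, so they contribute $O(n)=o(n^2)$ edges to $G'$ — harmless. The real danger is a pair of \emph{high-degree} vertices $u,v$ with $c(u,v)\ge r$ that fails to spawn $K_{2,r}^{p,q}$ because the $r$ common neighbors eat into the private neighborhoods. I would handle this by choosing the $K_{2,r}$ greedily to maximize the number of common neighbors used, then arguing: if $u$ has degree $\ge r+p$ it has at least $p$ neighbors outside any fixed set of $r$ common neighbors, and similarly for $v$ with $q$; by taking $u,v$ both of degree $\ge p+q+r$ one secures enough room (the overlap of the two private sides is itself within the common neighborhood, already counted), so in fact \emph{every} pair of vertices of degree $\ge p+q+r$ has $c(u,v)\le r-1$. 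Thus $\cN(K_{2,t},G)\le \binom{n}{2}\binom{r-1}{t} + (\text{contribution of pairs with a low-degree endpoint})$, and the latter is $O(n\cdot \binom{p+q+r}{t})=O(n)$, completing $\ex(n,K_{2,t},K_{2,r}^{p,q})\le(1+o(1))\binom{n}{2}\binom{r-1}{t}$. Finally I would remark that for $p=q=0$ this recovers the known $K_{2,r}$-free bound consistent with Gerbner--Patkós, and the whole argument is essentially a refinement of the degree-splitting in Proposition~\ref{nembipklikk} with $\binom{d}{t}$-type bounds replacing the clique count.
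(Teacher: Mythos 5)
Your overall strategy is the same as the paper's: count copies of $K_{2,t}$ by the pair $\{u,v\}$ forming the size-$2$ side, show that any two vertices of degree at least $p+q+r$ have at most $r-1$ common neighbors (else a $K_{2,r}$ plus enough private neighbors yields $K_{2,r}^{p,q}$), so such pairs contribute at most $\binom{n}{2}\binom{r-1}{t}$, and argue that pairs containing a low-degree vertex contribute only $O(n)$. The lower bound via $F(q(n),r)$ is also what the paper intends (it is stated, not spelled out, there), and your count of $(1+o(1))\binom{n}{2}\binom{r-1}{t}$ copies is right, up to the constant-factor bookkeeping you already flag (for $t=2$ the factor is $2$, not $3$, but this is immaterial).

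There is, however, one step you assert rather than prove, and as written it does not hold up: the claim that a vertex $u$ of degree less than $p+q+r$ has $G'$-degree $O(1)$, with the bound ``$O(n\cdot\binom{p+q+r}{t})$'' for the low-degree contribution. Choosing a $t$-subset $S$ of $N(u)$ in at most $\binom{p+q+r-1}{t}$ ways does not determine the partner $v$: you still need to bound the number of vertices adjacent to all of $S$. Without such a bound, a single low-degree vertex could a priori lie in $\Theta(n)$ copies of $K_{2,t}$, and the low-degree pairs could contribute $\Theta(n^2)$ --- the same order as the main term, which would ruin the leading constant. The paper closes this by observing that any $t$-set $S$ (with $t\ge 2$) has at most $p+q+r$ common neighbors: if $S$ had more, then every vertex of $S$ would have degree at least $p+q+r+1$, so two vertices of $S$ would be a high-degree pair with at least $r$ common neighbors, producing a $K_{2,r}^{p,q}$. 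With this observation each low-degree vertex is in at most $(p+q+r)\binom{p+q+r}{t}=O(1)$ copies, and your argument goes through exactly as in the paper. I would also suggest raising the degree threshold to $p+q+r+1$ (or arguing the disjointness of the pendant sets a bit more carefully, as you begin to do) so that the $K_{2,r}^{p,q}$ can always be completed when $uv$ is an edge; this costs nothing asymptotically.
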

\begin{proof}
Let $G$ be an $n$-vertex $K_{2,r}^{p,q}$-free graph.
 Assume first that two vertices $u$ and $v$ have degree at least $r+p+q$. Then $u$ and $v$ have at most $r-1$ common neighbors, thus they are in the smaller partite set of at most $\binom{r-1}{t}$ copies of $K_{2,t}$. 
 
 Assume now that the degree of a vertex $u$ is less than $r+p+q$. Then for every $v$ such that $u$ and $v$ form the smaller partite set of a $K_{2,t}$, there are at least $t$ neighbors of $u$ connected to $v$. There are at most $\binom{p+q+r}{t}$ ways to pick $t$ neighbors of $u$. These $t$ neighbors have at most $p+q+r$ common neighbors. Therefore, there are at most $(p+q+r)\binom{p+q+r}{t}$ vertices that can be connected to $t$ neighbors of $u$, thus $u$ is in the smaller partite set of at most $(p+q+r)\binom{p+q+r}{t}$ copies of $K_{2,t}$.
 
 We count the copies of $K_{2,t}$ by picking the two vertices in the smaller partite set. Let us count first the copies where the two vertices in the smaller partite set
 have degree at least $r+p+q$. They are in the smaller partite set of at most $\binom{r-1}{t}$ copies of $K_{2,t}$, thus there are at most $\binom{n}{2}\binom{r-1}{t}$ such copies of $K_{2,t}$. Finally, we count the copies of $K_{2,t}$ where one of the vertices in the smaller partite set has degree less than $p+q+r$. There are at most $n$ such vertices, and each of them is contained in the smaller partite set of at most $(p+q+r)\binom{p+q+r}{t}$ copies of $K_{2,t}$. Therefore, there are at most $\binom{n}{2}\binom{r-1}{t}+n(p+q+r)\binom{p+q+r}{t}$ copies of $K_{2,t}$ in $G$, finishing the proof.
\end{proof}

Now we are ready to prove Theorem \ref{cou}.

\begin{proof}[Proof of Theorem \ref{cou}]

The first line of the inequality is obvious, the second and third lines are dealt with in Propositions \ref{nembipklikk} and \ref{kvad}. The fourth line is shown by the construction $K_{2,n-2}$.

For non-bipartite graphs $F$, the lower bound holds by definition and the upper bound follows from the results mentioned in the introduction: the result of Gy\H ori, Pach and Simonovits \cite{gyps} stating that a complete $(k-1)$-partite graph contains the most copies of any complete multipartite $H$ among $K_k$-free graphs, and a result of Gerbner and Palmer \cite{GP2019} stating that changing $K_k$ to any $k$-chromatic $F$ results in an additive term $o(n^{|V(H)|})$. 
\end{proof}


\vskip 0.3truecm

\textbf{Funding}: Research supported by the National Research, Development and Innovation Office - NKFIH under the grants KH 130371, SNN 129364, FK 132060, and KKP-133819,  and by the Ministry of Education and Science of the Russian Federation in the framework of MegaGrant no 075-15-2019-1926.


\begin{thebibliography}{99}

\bibitem{as} N. Alon, C. Shikhelman. Many $T$ copies in $H$-free graphs. \textit{Journal of Combinatorial
Theory, Series B}, \textbf{121}:146--172, 2016.

\bibitem{fur} Z. F\"uredi. New asymptotics for bipartite Tur\'an numbers. \textit{Journal of Combinatorial
Theory, Series A}, \textbf{75}, 141--144, 1996.

\bibitem{gerbner2} D. Gerbner. Generalized Tur\'an problems for small graphs. {\it arXiv preprint} arXiv:2006.16150, 2020.

\bibitem{ggymv} D. Gerbner, E. Gy\H{o}ri, A. Methuku, M. Vizer. Generalized Tur\'an numbers for even cycles. \textit{Journal of Combinatorial Theory, Series B}, \textbf{145}, 169--213, 2020.

\bibitem{gnv} D. Gerbner, Z. L. Nagy, M. Vizer. Unified approach to the generalized Tur\'an
problem and supersaturation. \textit{arXiv preprint} arXiv:2008.12093, 2020.

        \bibitem{GP2019} D. Gerbner, C. Palmer. Counting copies of a fixed subgraph of $F$-free graphs. \textit{European Journal of Mathematics}, \textbf{82}, 103001, 2019.
        
                \bibitem{gp2} D. Gerbner, C. Palmer. Some exact results for generalized Tur\'an problems, {\it arXiv preprint} arXiv:2006.03756, 2020.
        
        \bibitem{gepat} D. Gerbner, B. Patk\'os, Generalized Turán problems for complete bipartite graphs, {\it arXiv preprint} arXiv:2101.08094, 2021.

\bibitem{gyps}
E. Gy\H ori, J. Pach, M. Simonovits, On the maximal number of certain subgraphs in $K_r$-free graphs. \textit{Graphs and Combinatorics} \textbf{7}(1), 31--37, 1991.

\bibitem{HI} M. N. Huxley and H. Iwaniec, Bombieri's theorem in short intervals, \textit{Mathematika},
\textbf{22}, 188--194, 1975.

\bibitem{mq} J. Ma, Y. Qiu. Some sharp results on the generalized Tur\'an numbers. \textit{European Journal of Combinatorics}, \textbf{84}, 103026, 2020.

\bibitem{T}
P. Tur\'an. Egy gr\'afelm\'eleti sz\'els\H o\'ert\'ekfeladatr\'ol. \textit{Mat. Fiz. Lapok}, \textbf{48}, 436--452, 1941.

\bibitem{zg} T. Zhang, G. Ge, (2019). Some extremal results on $K_{s,t}$-free graphs. \textit{arXiv preprint} arXiv:1903.03233.

\bibitem{zykov} A. A. Zykov. On some properties of linear complexes. \textit{Matematicheskii sbornik},
\textbf{66}(2):163--188, 1949.

\end{thebibliography}
\end{document}